\newtheorem{theorem}{Theorem}
\newtheorem{lemma}[theorem]{Lemma}
\def\irr#1{{\rm  Irr}(#1)}
\def\ker#1{{\rm ker} (#1)}
\begin{document}

\title[Characters and Quotients]{${\rm B}_\pi$-characters and quotients}
\author[Mark L. Lewis]{Mark L. Lewis}

\address{Department of Mathematical Sciences, Kent State University, Kent, OH 44242}
\email{lewis@math.kent.edu}

\subjclass[2010]{ 20C15. }
\keywords{${\rm B}_\pi$-characters, $\pi$-theory, $\pi$-separable groups}

\begin{abstract}
Let $\pi$ be a set of primes, and let $G$ be a finite $\pi$-separable group.  We consider the Isaacs ${\rm B}_\pi$-characters.  We show that if $N$ is a normal subgroup of $G$, then ${\rm B}_\pi (G/N) = \irr {G/N} \cap {\rm B}_\pi (G)$.
\end{abstract}

\maketitle

All groups in this paper are finite.  Let $\pi$ be a set of primes and let $G$ be a $\pi$-separable group.  In \cite{pisep}, Isaacs defined the subset ${\rm B}_\pi (G)$ of $\irr G$.  In this note we are going to prove the following:

\begin{theorem} \label{main}
Suppose $\pi$ is a set of primes and $G$ is a $\pi$-separable group.  If $N$ is a normal subgroup of $G$, then ${\rm B}_\pi (G/N) = \irr {G/N} \cap {\rm B}_\pi (G)$.
\end{theorem}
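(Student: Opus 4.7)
My plan rests on Isaacs' canonical bijection $\chi \mapsto \chi^0$ from $\Bpi{G}$ onto $\Ipi{G}$, where $\chi^0$ denotes the restriction of $\chi$ to the set $G^0$ of $\pi$-elements of $G$, together with its analogue for the quotient $G/N$. Since inflation from $G/N$ to $G$ sends $\pi$-partial characters of $G/N$ to $\pi$-partial characters of $G$, one expects a commutative square relating the four sets $\Bpi{G}$, $\Bpi{G/N}$, $\Ipi{G}$, and $\Ipi{G/N}$; verifying this commutativity is essentially the content of the theorem.

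I would argue by induction on $|N|$, reducing to the case that $N$ is a minimal normal subgroup of $G$. By $\pi$-separability, $N$ is then either a $\pi'$-group or a $\pi$-group, and I would handle these two subcases separately. In the $\pi'$ case, $N \cap G^0 = 1$, and the natural map $G^0 \to (G/N)^0$ is a conjugation-equivariant bijection of the $\pi$-element sets; inflation of class functions along this identification is essentially an identity operation, so the equality $\Bpi{G/N} = \irr{G/N} \cap \Bpi{G}$ should follow directly from the two Isaacs bijections and the uniqueness of the $B_\pi$-lift.

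The $\pi$ case is the technical heart of the argument and, I expect, the main obstacle. Now $N \subseteq G^0$, and one must show that the canonical $B_\pi$-lift in $G/N$ inflates to the canonical $B_\pi$-lift in $G$. Here I would invoke Isaacs' nucleus construction: every $\chi \in \Bpi{G}$ arises by induction from a distinguished pair $(W, \gamma)$ where $W \leq G$ and $\gamma \in \irr{W}$ is of a prescribed form. For $\chi$ with $N \leq \ker\chi$ and $N$ a normal $\pi$-subgroup, one should be able to arrange $N \leq W$ and $N \leq \ker\gamma$, so that $(W/N, \bar\gamma)$ is a nucleus in $G/N$ inducing the corresponding character $\bar\chi$, and conversely every nucleus in $G/N$ lifts to one in $G$. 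Verifying that this nucleus-level correspondence is well defined and preserves the $B_\pi$ property in both directions is the delicate step; once achieved, both inclusions of the theorem follow simultaneously.
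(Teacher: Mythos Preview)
Your instinct to pass through the nucleus construction is correct, and that is exactly what the paper does---but uniformly, without the induction on $|N|$, the $\pi$/$\pi'$ case split, or any appeal to $\Ipi{G}$.

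There is a concrete error in your $\pi'$ case: the map $G^0 \to (G/N)^0$ is \emph{not} a bijection. Take $G = S_3$, $\pi = \{2\}$, $N = A_3$; then $|G^0| = 4$ while $|(G/N)^0| = 2$. The surjection does induce a bijection on conjugacy classes of $\pi$-elements, but even granting that, you have not explained why inflation carries $\Ipi{G/N}$ into $\Ipi{G}$, and establishing that without circularity is essentially as hard as the theorem itself.

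The real gap is the one you flag in the $\pi$ case: ``one should be able to arrange $N \le W$ and $N \le \ker\gamma$.'' Getting $N \le W$ is easy, since $(N,1_N)$ is a $\pi$-factored subnormal pair and maximal such pairs are conjugate. Getting $N \le \ker\gamma$ is the whole point. At each step of the nucleus chain the character $\sigma$ in play is $\pi$-factored, say $\sigma = \alpha\beta$, and to descend to $G/N$ one needs $N \le \ker\alpha \cap \ker\beta$ so that $\sigma$ remains $\pi$-factored modulo $N$. The paper isolates exactly this as its key lemma: for any $\pi$-factored $\sigma = \alpha\beta$ one has $\ker\sigma = \ker\alpha \cap \ker\beta$, proved by restricting to $K = \ker\sigma$, showing $K \le Z(\alpha) \cap Z(\beta)$, and then observing that a $\pi$-root of unity times a $\pi'$-root of unity equals $1$ only when both are $1$. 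With this lemma the nucleus $(X,\eta)$ for $\chi$ in $G$ descends verbatim to a nucleus $(X/N,\eta)$ in $G/N$, and since $\eta$ is $\pi$-special in $\irr X$ if and only if it is $\pi$-special in $\irr{X/N}$, the theorem follows at once. Your case split would shortcut half of the kernel lemma (when $N$ is a $\pi$-group, $N \le \ker\beta$ follows directly from $\pi'$-speciality of $\beta$, and dually), but the paper's uniform statement is both cleaner and no harder to prove.
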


Following Gajendragadkar in \cite{gajen}, we say that a character $\chi \in \irr G$ is $\pi$-special if $\chi (1)$ is a $\pi$-number and for every subnormal subgroup $S$ of $G$, the irreducible constituents of $\chi_S$ have determinantal order that is a $\pi$-number.  In Proposition 7.1 of \cite{gajen}, Gajendragadkar proved that if $\alpha, \beta \in \irr G$ are characters so that $\alpha$ is $\pi$-special and $\beta$ is $\pi'$-special, then $\alpha \beta$ is irreducible, and this factorization is unique.  I.e., if $\alpha \beta = \alpha' \beta'$ where $\alpha'$ is $\pi$-special and $\beta'$ is $\pi'$-special, then $\alpha = \alpha'$ and $\beta = \beta'$.

Using \cite{pisep}, we say that $\chi \in \irr G$ is {\it $\pi$-factored} if there exists $\pi$-special $\alpha$ and $\pi'$-special $\beta$ so that $\chi = \alpha \beta$.  The following lemma regarding the kernels of $\pi$-factored characters is key to our argument.

\begin{lemma} \label{keylemma}
Suppose $\pi$ is a set of primes and $G$ is a $\pi$-separable group.  If $\chi \in \irr G$ satisfies $\chi = \alpha \beta$ where $\alpha$ is $\pi$-special and $\beta$ is $\pi'$-special, then $\ker {\chi} = \ker {\alpha} \cap \ker {\beta}$.
\end{lemma}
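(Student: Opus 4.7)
The inclusion $\ker\alpha \cap \ker\beta \subseteq \ker\chi$ is immediate from $\chi = \alpha\beta$, so the content of the lemma is the reverse inclusion. My plan is to fix $g \in \ker\chi$ and show that $g$ acts trivially in each of the representations afforded by $\alpha$ and $\beta$ separately. The first step is to use the triangle inequality: since $\chi(g) = \chi(1) = \alpha(1)\beta(1)$ while $|\alpha(g)| \le \alpha(1)$ and $|\beta(g)| \le \beta(1)$, equality must hold in both factors. Thus $|\alpha(g)| = \alpha(1)$ and $|\beta(g)| = \beta(1)$, which means $g$ lies above the center of $G/\ker\alpha$ and above the center of $G/\ker\beta$, and there are roots of unity $\lambda, \mu$ with $\alpha(g) = \lambda \alpha(1)$, $\beta(g) = \mu \beta(1)$, and $\lambda \mu = 1$.

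The crux is then to show that $\lambda$ is a $\pi$-root of unity and $\mu$ is a $\pi'$-root of unity; once this is done, $\lambda = \mu^{-1}$ forces both to equal $1$, giving $g \in \ker\alpha \cap \ker\beta$. To handle $\lambda$, let $Z_\alpha \trianglelefteq G$ denote the preimage in $G$ of $\gpcen{G/\ker\alpha}$. Then $\alpha_{Z_\alpha} = \alpha(1) \hat\lambda$ for a single linear character $\hat\lambda$ of $Z_\alpha$, and $\hat\lambda(g) = \lambda$. Because $Z_\alpha$ is normal in $G$, every irreducible constituent of the restriction of a $\pi$-special character to $Z_\alpha$ is again $\pi$-special (this follows directly from Gajendragadkar's definition, as subnormal subgroups of $Z_\alpha$ are subnormal in $G$ and constituents of $\hat\lambda_T$ are constituents of $\alpha_T$). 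Consequently $\hat\lambda$ is a $\pi$-special linear character, so its order is a $\pi$-number, and hence $\lambda$ is a $\pi$-root of unity. The symmetric argument with $\beta$ in place of $\alpha$ yields that $\mu$ is a $\pi'$-root of unity.

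I expect the only step that requires any care is the justification that $\hat\lambda$ is $\pi$-special, since everything else reduces to elementary facts about characters and roots of unity. This single normal-subgroup restriction principle for $\pi$-special characters, together with the $\pi/\pi'$ coprimality trick at the end, is what drives the proof.
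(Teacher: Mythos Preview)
Your proof is correct and follows essentially the same route as the paper's: the triangle inequality places $g$ in $Z(\alpha)\cap Z(\beta)$, restriction of a $\pi$-special character to a normal subgroup yields a linear constituent of $\pi$-order, and then the coprimality of a $\pi$-root and a $\pi'$-root of unity with product $1$ forces both to equal $1$. The only cosmetic difference is that the paper restricts $\alpha$ and $\beta$ to $K=\ker\chi$ itself (after noting $K\le Z(\alpha)\cap Z(\beta)$), whereas you restrict each to its own $Z_\alpha$, $Z_\beta$; this changes nothing of substance.
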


\begin{proof}
It is obvious that $\ker {\alpha} \cap \ker {\beta} \le \ker {\chi} = K$.  We need to show that $K \le \ker {\alpha} \cap \ker {\beta}$.  We first claim that $K \le Z (\alpha) \cap Z (\beta)$.  Suppose $g \in K$.  Then $\alpha (g)\beta (g) = \chi (g) = \chi (1) = \alpha (1) \beta (1)$.  Hence, $\alpha (1) \beta (1) = |\alpha (g) \beta (g)| = |\alpha (g)| |\beta (g)|$.  By Lemma 2.15(c) of \cite{text}, we know that $|\alpha (g)| \le \alpha (1)$ and $|\beta (g)| \le \beta (1)$.  The previous equality implies that these inequalities must be equalities, so $g \in Z (\alpha)$ and $g \in Z (\beta)$. This proves the claim.

By Lemma 2.27 (c) of \cite{text}, we see that $\alpha_K = \alpha (1) \mu$ and $\beta_K = \beta (1) \nu$ for linear characters $\mu$ and $\nu$ in $\irr K$.  Because $\alpha$ is $\pi$-special, $\mu$ must have $\pi$-order and because $\beta$ is $\pi'$-special, $\nu$ must have $\pi'$-order.  For $g \in K$, this implies that $\mu (g)$ is a $\pi$ root of unity and $\nu (g)$ is a $\pi'$-root of unity.
We have $\alpha(1) \beta (1) = \chi (1) = \chi (g) = \alpha (g) \beta (g) = \alpha (1) \mu (g) \beta (1) \nu (g)$.  This implies that $\nu (g) \mu (g) = 1$.  The only way that the product of a $\pi$-root of unity and a $\pi'$-root can equal $1$ is if they are both $1$.  I.e., we must have $\mu (g) = \nu (g) = 1$.  This implies that $\alpha (1) = \alpha (g)$ and $\beta (1) = \beta (g)$.  Therefore, $g \in \ker {\alpha} \cap \ker {\beta}$ as desired.
\end{proof}

We continue to let $\pi$ be a set of primes and $G$ be a $\pi$-separable group, and we fix $\chi \in \irr G$.  We say that $(S,\sigma)$ is a {\it subnormal pair} for $\chi$ if $S$ is a subnormal subgroup of $G$, $\sigma$ is an irreducible constituent of $\chi_S$.  In addition, we say that $(S,\sigma)$ is {\it  $\pi$-factored} if $\sigma$ is $\pi$-factored.  We can define a partial ordering on the subnormal pairs for $\chi$ by $(S,\sigma) \le (T,\tau)$ if $S \le T$ and $\sigma$ is a constituent of $\tau_S$.

Notice that $(1,1_1)$ is a $\pi$-factored subnormal pair for $\chi$, so there exists a maximal $\pi$-factored subnormal pair for $\chi$ with respect to the partial ordering.  It is shown in Theorem 3.2 of \cite{pisep} that the set of maximal $\pi$-factored subnormal pairs for $\chi$ are conjugate in $G$.  Let $(S,\sigma)$ be a maximal $\pi$-factored subnormal pair for $\chi$, and let $T$ be the stabilizer of $(S,\sigma)$ in $G$.  It is shown in Theorem 4.4 of \cite{pisep} that there is a unique $\tau \in \irr {T \mid \sigma}$ so that $\tau^G = \chi$.

We can now define the {\it $\pi$-nucleus} for $\chi$.  If $\chi$ is $\pi$-factored, then $(G,\chi)$ is the nucleus for $\chi$.  If $\chi$ is not $\pi$-factored, then let $(S,\sigma)$ be a maximal $\pi$-factored subnormal pair for $\chi$.  Let $T$ be the stabilizer of $(S,\sigma)$ in $G$, and let $\tau \in \irr {T \mid \sigma}$ so that $\tau^G = \chi$.  By Lemma 4.5 of \cite{pisep}, we know that $T < G$, so we can inductively define the $\pi$-nucleus of $\chi$ to be the $\pi$-nucleus of $\tau$.  Because the maximal $\pi$-factored subnormal pairs are all conjugate, it follows that the $\pi$-nucleus for $\chi$ is well-defined up to conjugacy.  (See the argument on page 108 of \cite{pisep}.)

If $(X,\eta)$ is a $\pi$-nucleus for $\chi$, then it is not difficult to see that $\eta$ must be $\pi$-factored.  As defined in Definition 5.1 of \cite{pisep}, we say that $\chi \in {\rm B}_\pi (G)$ if and only if $\eta$ is $\pi$-special where $(X,\eta)$ is a $\pi$-nucleus for $\chi$.

\begin{lemma}\label{one}
Let $\pi$ be a set of primes and let $G$ be a $\pi$-separable group.  Suppose that $N$ is a normal subgroup of $G$.  If $\chi \in \irr {G/N}$ has $\pi$-nucleus $(X,\eta)$, then $(X/N,\eta)$ is a $\pi$-nucleus for $\chi$ viewed as character in $\irr {G/N}$.
\end{lemma}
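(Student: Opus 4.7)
The plan is to proceed by induction on $|G|$, using the recursive definition of the $\pi$-nucleus. If $\chi$ is $\pi$-factored in $G$, then $(X,\eta) = (G,\chi)$. Writing $\chi = \alpha\beta$ with $\alpha$ $\pi$-special and $\beta$ $\pi'$-special, Lemma \ref{keylemma} gives $N \le \ker{\alpha} \cap \ker{\beta}$; since $\pi$-specialness passes to quotients by normal subgroups (a standard fact from \cite{gajen}), both factors descend to $G/N$ and $\chi$ remains $\pi$-factored there. Hence the nucleus of $\chi$ in $G/N$ is $(G/N, \chi) = (X/N, \eta)$.

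Now assume $\chi$ is not $\pi$-factored in $G$. The first subtask is to exhibit a maximal $\pi$-factored subnormal pair $(S,\sigma)$ for $\chi$ in $G$ with $N \le S$. Since $N \le \ker{\chi}$ forces $\chi_N = \chi(1) \cdot 1_N$, the pair $(N, 1_N)$ is itself a $\pi$-factored subnormal pair for $\chi$, and any $\pi$-factored pair maximal among those $\ge (N,1_N)$ is globally maximal. Combined with the $G$-conjugacy of all maximal $\pi$-factored pairs (Theorem 3.2 of \cite{pisep}) and the normality of $N$, this shows that every globally maximal $\pi$-factored subnormal pair for $\chi$ contains $N$.

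Let $T$ be the stabilizer of $(S,\sigma)$ in $G$ and let $\tau \in \irr{T \mid \sigma}$ be the unique character with $\tau^G = \chi$, so that $(X,\eta)$ is by definition the $\pi$-nucleus of $\tau$ in $T$. Because $N \le S \le T$ and $\tau$ is a constituent of $\chi_T$, we obtain $N \le \ker{\tau}$, so $\tau$ may be viewed as a character of $T/N$. Since $T < G$ by Lemma 4.5 of \cite{pisep}, the induction hypothesis applied to $(\tau, T, N)$ yields that the $\pi$-nucleus of $\tau$ viewed in $T/N$ equals $(X/N, \eta)$.

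To finish, the plan is to identify this with the $\pi$-nucleus of $\chi$ viewed in $\irr{G/N}$. The correspondence $S \leftrightarrow S/N$ should give a bijection between $\pi$-factored subnormal pairs for $\chi$ in $G$ with $N \le S$ and $\pi$-factored subnormal pairs for $\chi$ in $G/N$, preserving the partial order; the descent direction uses Lemma \ref{keylemma} together with the quotient property of $\pi$-specialness, and the lift direction uses the corresponding lift property. Under this bijection the chosen maximal pair $(S,\sigma)$ corresponds to a maximal pair $(S/N,\sigma)$ in $G/N$ whose stabilizer is $T/N$ and whose associated uniquely inducing character is (the image of) $\tau$. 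Noting that $\chi$ cannot be $\pi$-factored in $G/N$ either, lest lifting contradict our case assumption, the $\pi$-nucleus of $\chi$ in $G/N$ is by definition the $\pi$-nucleus of $\tau$ in $T/N$, which is $(X/N,\eta)$. The main obstacle I anticipate is the bookkeeping in this last paragraph, specifically verifying that the bijection respects $\pi$-factored structure, stabilizers, and maximality; Lemma \ref{keylemma} is the essential analytic ingredient throughout.
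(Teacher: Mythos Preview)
Your proposal is correct and follows essentially the same route as the paper: induction on $|G|$, the observation that $(N,1_N)$ is a $\pi$-factored subnormal pair forcing $N \le S$ via conjugacy of maximal pairs and normality of $N$, the order-preserving correspondence of $\pi$-factored subnormal pairs over and under $N$ (with Lemma~\ref{keylemma} supplying the key fact that the factors of a $\pi$-factored character with $N$ in its kernel themselves contain $N$ in their kernels), and the identification of stabilizer and inducing character in $G/N$. You are slightly more explicit than the paper in two places---the base case, where you spell out why $\chi$ remains $\pi$-factored in $G/N$, and the observation that $\chi$ cannot become $\pi$-factored in $G/N$---but these are exactly the points the paper passes over as ``obvious'' or leaves implicit, so there is no genuine difference in strategy.
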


\begin{proof}
If $(X,\eta) = (G,\chi)$, then this is obvious.  Thus, we may assume that $X < G$.  Let $(S,\sigma)$ be a maximal $\pi$-factored subnormal pair for $\chi$ with stabilizer $T$ and character $\tau \in \irr {T \mid \sigma}$ so that $\tau^G = \chi$ and $(X,\eta)$ is a $\pi$-nucleus for $\tau$.  Notice that $(N,1_N)$ is a $\pi$-factored subnormal pair for $\chi$, so it is contained in a maximal such pair.  Since $N$ is normal, this implies that $N \le S$.  Because $\sigma$ is a constituent of $\chi_S$, we see that $N \le \ker {\sigma}$.  By Lemma \ref{keylemma}, we see that $\sigma$ is $\pi$-factored as a character in $\irr {S/N}$.  Notice that $(S/N,\sigma) \le (S^*/N,\sigma^*)$ if and only if $(S,\sigma) \le (S^*, \sigma^*)$, and by Lemma \ref{keylemma}, $\sigma^*$ is $\pi$-factored in $\irr {S^*/N}$ if and only if it is $\pi$-factored in $\irr {S^*}$.  Therefore, $(S/N,\sigma)$ must be a maximal $\pi$-factored subnormal pair for $\chi$ viewed as a character in $\irr {G/N}$.  It is immediate that $T/N$ will be the stabilizer for $(S/N,\sigma)$ in $G/N$ and that $\tau$ is the unique character in $\irr {T/N \mid \sigma}$ that induces $\chi$.  By induction, $(X/N,\eta)$ will the $\pi$-nucleus for $\tau$ viewed as a character of $\irr {T/N}$, and thus, $(X/N,\eta)$ will be the $\pi$-nucleus for $\chi$ viewed as a character of $\irr {G/N}$.
\end{proof}

We are now ready to prove Theorem \ref{main}.

\begin{proof}[Proof of Theorem \ref{main}]
Note that ${\rm B}_{\pi} (G/N) \subseteq \irr {G/N}$.  Hence, it suffices to show for $\chi \in \irr {G/N}$ that $\chi \in {\rm B}_\pi (G)$ if and only if $\chi \in {\rm B}_{\pi} (G/N)$.  Suppose $\chi \in \irr {G/N}$.  Let $(X,\eta)$ be a $\pi$-nucleus for $\chi$.  By Lemma \ref{one}, $(X/N,\eta)$ is a nucleus for $\chi$ viewed as a character of $\irr {G/N}$.  Note that $\eta$ is $\pi$-special as character in $\irr X$ if and only if it is $\pi$-special viewed as a character of $\irr {X/N}$.  We know that $\chi \in {\rm B}_{\pi} (G)$ if and only if $\eta$ is $\pi$-special character of $\irr X$ and $\chi \in {\rm B}_{\pi} (G/N)$ if and only if $\eta$ is $\pi$-special as a character of $\irr {X/N}$.  Since we saw that these are equivalent, this proves the theorem.
\end{proof}

\end{document}